  \newenvironment{proof}{\vspace{1ex}\noindent{\bf Proof:}}{\hspace*{\fill}$\blacksquare$\vspace{1ex}}
  \newenvironment{proofof}[1]{\vspace{1ex}\noindent{\bf Proof of #1:}}{\hspace*{\fill}$\blacksquare$\vspace{1ex}}
  \newtheorem{theorem}{Theorem} %%[section]
  \newtheorem{lemma} [theorem] {Lemma}%[section]
  \newtheorem{corollary} [theorem] {Corollary}%[section]
\newcommand{\eR}[0]{\ensuremath{ \mathbb R}}
\newcommand{\eN}[0]{\ensuremath{ \mathbb N}}
\newcommand{\Fscr}[0]{\ensuremath{{\mathscr F}}}
\DeclareMathOperator{\conv}{conv}
\title{A counterexample to conjecture 18.5 in ``Geometric Etudes in Combinatorial Mathematics'', second edition}
  \author{Tobias M\"uller\thanks{Utrecht University and Centrum Wiskunde \& Informatica. E-mail: \texttt{tobias@cwi.nl}.  
Supported in part by a VENI grant from Netherlands Organisation for Scientific Research (NWO)}}
\begin{document}

  \maketitle

\begin{abstract}
A collection of sets $\Fscr$ has the $(p,q)$-property if out of every 
$p$ elements of $\Fscr$ there are $q$ that have a point in common.
A transversal of a collection of sets $\Fscr$ is a set $A$ that intersects every member of
$\Fscr$.
Gr\"unbaum conjectured that every family $\Fscr$ of closed, convex sets in the plane
with the $(4,3)$-property and at least two elements that are compact
has a transversal of bounded cardinality.
Here we construct a counterexample to his conjecture.
On the positive side, we also show that if such a collection $\Fscr$ contains two {\em disjoint} compacta then there is a
transveral of cardinality at most 13.
\end{abstract}

%%%%%%%%%%%%%%%%%%%%%%%%%%%%%%%%%%%%%%%%%%%%%%%%%%%%%%%%%%%%%%%%%%%%%%%%%%%%%%%
%%%%%%%%%%%%%%%%%%%%%%%%%%%%%%%%%%%%%%%%%%%%%%%%%%%%%%%%%%%%%%%%%%%%%%%%%%%%%%%

\section{Introduction and statement of results}

Let $\Fscr$ be a collection of sets.
A {\em transversal} of $\Fscr$ is a set $A$ that intersects every member of $\Fscr$
(that is, $A \cap F \neq \emptyset$ for all $F \in\Fscr$).
The {\em transversal number} of {\em piercing number} $\tau(\Fscr)$ of $\Fscr$ is
the smallest size of a transversal, i.e.

\[ \tau(\Fscr) := \min_{A \text{ transversal of } \Fscr } |A|. \]

\noindent
(Note that $\tau(\Fscr)=\infty$ if no finite transversal exists.)

A collection of sets $\Fscr$ has the $(p,q)$-property if out of every $p$ sets of $\Fscr$
there are $q$ that have a point in common.
In 1957, Hadwiger and Debrunner~\cite{HadwigerDebrunner1957} conjectured
that for every $d$ and every $p \geq q \geq d+1$ there is a universal constant
$c = c(d;p,q)$ such that every finite collection $\Fscr$ of convex sets in $\eR^d$ with 
the $(p,q)$-property satisfies $\tau(\Fscr) \leq c$.
(By considering hyperplanes in general position it is easily seen that for $q\leq d$ no such
universal constant $c$ can exist.)
Many years later, in 1992, Alon and Kleitman~\cite{AlonKleitman1992} finally proved the
conjecture of Hadwiger and Debrunner by cleverly combining various pre-existing tools from the literature.

In the special case when $p=q=d+1$ the Hadwiger-Debrunner conjecture reduces to the classical theorem of 
Helly~\cite{Helly1923} which states that if $\Fscr$ is a finite family of convex sets in $\eR^d$ such that 
every $d+1$ members of $\Fscr$ have a point in common then $\tau(\Fscr) = 1$.
A variant of Helly's theorem states that if $\Fscr$ is an infinite collection
of closed, convex sets in $\eR^d$ and at least one member of $\Fscr$ is compact then
$\tau(\Fscr) = 1$.

Erd\H{o}s conjectured that in the first nontrivial case of the Hadwiger-Debrunner problem, a
similar variant would be true. That is, he conjectured that if $\Fscr$ is a collection of closed, convex sets
in the plane with the $(4,3)$-property and one of the members of $\Fscr$ is compact, then 
$\tau(\Fscr) \leq c$ for some universal constant $c$. 
Boltyanski and Soifer included this conjecture in the first edition of their book ``Geometric Etudes in Combinatorial Mathematics''
and they offered a prize of \$25 for its solution.
Eighteen years later, Gr\"unbaum found a simple counterexample while proofreading the second edition, earning the reward.
Gr\"unbaum also made a conjecture of his own, stating that if $\Fscr$ is as above, and {\em two} members of $\Fscr$ are compact
then $\tau(\Fscr)$ is finite. (See~\cite{SoiferBoek}, pages 198-199.)
Here we show that Gr\"unbaum's conjecture fails as well:

\begin{theorem}
There exists a collection $\Fscr$ of closed, convex subsets of the plane
such that 
\begin{enumerate} 
\item $\Fscr$ has the $(4,3)$-property, and;
\item Two of the elements of $\Fscr$ are compact, and;
\item $\tau(\Fscr) = \infty$. 
\end{enumerate}
\end{theorem}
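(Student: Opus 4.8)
The plan is to write down the family explicitly, guided by two observations. First, the two compacta must \emph{intersect}: if $K_1\cap K_2=\emptyset$, then for any two further members $F,G$ the triples $\{K_1,K_2,F\}$ and $\{K_1,K_2,G\}$ are automatically empty, so the only way $\{K_1,K_2,F,G\}$ can have the $(4,3)$-property is that $F\cap G$ meets $K_1$ or meets $K_2$; forcing this for all pairs is exactly the mechanism that yields the positive bound $\tau\le 13$. Allowing $K_1\cap K_2\neq\emptyset$ opens a ``valve'': the triple $\{K_1,K_2,F\}$ is nonempty as soon as $F$ meets $R:=K_1\cap K_2$. I will arrange that \emph{every} non-compact member meets $R$, so this valve alone settles every four-subset containing both compacta.

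Concretely, for $n\ge 1$ let
\[ F_n=\Bigl\{(x,y)\in\eR^2 : y\ge n\left|x-\tfrac{1}{\sqrt n}\right|\Bigr\}, \]
a closed convex cone with apex $(1/\sqrt n,0)$ opening upward. Let $K_1,K_2$ be the closed disks of radius $\sqrt2$ centred at $(0,1)$ and $(0,-1)$; these are compact, they overlap (both contain the origin), and $R=K_1\cap K_2\supseteq[0,1]\times\{0\}$. Since $1/\sqrt n\le 1$, every apex $(1/\sqrt n,0)$ lies in $R$, hence $\mathrm{apex}_n\in F_n\cap R$ for all $n$. Put $\Fscr=\{F_n:n\ge1\}\cup\{K_1,K_2\}$. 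Two properties of the cones drive everything. (a) Any three cones have a common point: for $M=\sqrt{\max(a,b,c)}$ one has $(0,M)\in F_a\cap F_b\cap F_c$, because $(0,M)\in F_n\iff M\ge\sqrt n$. (b) Each point of the plane lies in only finitely many cones: if $x\neq0$ then $n|x-1/\sqrt n|\to\infty$, and if $x=0$ then $(0,y)\in F_n$ forces $n\le y^2$.

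The verification of $(4,3)$ is then a three-case check on four-subsets. A subset of four cones is handled by property (a) (three of them already meet). A subset of three cones and one compactum is again handled by (a). A subset $\{F_a,F_b,K_1,K_2\}$ containing both compacta is handled by the valve: $\mathrm{apex}_a\in F_a\cap K_1\cap K_2$, so $\{F_a,K_1,K_2\}$ is a good triple. That exhausts all four-subsets, so $\Fscr$ has the $(4,3)$-property, and $K_1,K_2$ are the two required compacta. Finally $\tau(\Fscr)=\infty$ is immediate by monotonicity: any transversal of $\Fscr$ is a transversal of $\{F_n\}$, and by (b) a finite point set meets only finitely many cones, so $\tau(\{F_n\})=\infty$ and hence $\tau(\Fscr)=\infty$.

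The part I expect to be the real obstacle --- and the crux of the construction --- is arranging $\tau=\infty$ \emph{simultaneously} with ``every member meets the bounded set $R$''. This looks impossible at first: for a family of half-planes each meeting a fixed disk, three suitably placed far-away points pierce all of them, so $\tau\le 3$, and an analogous ``point covering in line space'' argument bounds $\tau$ for strips as well. The cones escape this precisely because they are thin at infinity (a far point lies in almost none of them) yet all reach the origin region through their clustering apexes. The delicate balance is that the apex abscissae must satisfy $1/\sqrt n\to 0$ (so every cone meets $R$ and the finite intersections share an upward vertical ray, giving the $(3,3)$-type property with \emph{no} common point) while the entry heights $n\cdot(1/\sqrt n)=\sqrt n\to\infty$ (so each point sits in only finitely many cones, forcing $\tau=\infty$). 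Getting both limits right is the only thing one must be careful about; everything else is the routine case analysis above.
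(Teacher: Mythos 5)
Your proof is correct and follows essentially the same strategy as the paper: two overlapping compacta whose intersection contains the apexes of a sequence of ever-steepening convex cones, so that quadruples with both compacta are handled by an apex, any three cones meet high on the $y$-axis, and each point lies in only finitely many cones, forcing $\tau(\Fscr)=\infty$. The only differences are cosmetic (disks instead of segments, symmetric cones instead of vertical-sided wedges, apexes tending to $0$ instead of to $1$).
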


\noindent
On the positive side, we show that any collection $\Fscr$ of closed, convex sets in the plane that contains two disjoint compacta
and satifies the $(4,3)$-property
does have universally bounded transversal number:

\begin{theorem}\label{thm:disjoint}
If $\Fscr$ is a collection of closed, convex sets 
in the plane such that 
\begin{enumerate}
\item $\Fscr$ has the $(4,3)$-property, and;
\item $\Fscr$ contains two disjoint compacta, 
\end{enumerate}
then $\tau(\Fscr) \leq 13$. 
\end{theorem}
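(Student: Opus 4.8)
The plan is to use the two disjoint compacta as fixed ``anchors'' against which to apply the $(4,3)$-property, and to split $\Fscr$ into three pieces according to how a set meets the anchors. Call the two disjoint compacta $K_1,K_2\in\Fscr$; being disjoint compact convex sets, they can be strictly separated by a line, so I set up coordinates in which this line is the $y$-axis $\ell$, with $K_1$ to the left and $K_2$ to the right. The first step is to observe that every other member meets $K_1$ or $K_2$: applying the $(4,3)$-property to $\{K_1,K_2,F,G\}$ for any two further members $F,G$, and using $K_1\cap K_2=\emptyset$ to rule out any concurrent triple containing both $K_1$ and $K_2$, the concurrent triple must consist of $F,G$ together with $K_1$ or with $K_2$; in particular $F$ meets $K_1\cup K_2$. (Families with only a handful of members are pierced trivially, so I may assume enough sets are present.) Hence $\Fscr=\Fscr_1\cup\Fscr_2$ with $\Fscr_i=\{F\in\Fscr: F\cap K_i\neq\emptyset\}$, and moreover any two members already intersect, their intersection meeting $K_1\cup K_2$.

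Next I would dispose of the \emph{one-sided} sets cheaply. Let $\Fscr_1^{*}$ be the members meeting $K_1$ but not $K_2$. For any three of them, apply the $(4,3)$-property to these three together with $K_2$: since none of the three meets $K_2$, no concurrent triple can involve $K_2$, so the three sets themselves are concurrent. Thus every three members of $\Fscr_1^{*}$ share a point, and since $K_1\in\Fscr_1^{*}$ is compact, the compactness version of Helly's theorem quoted in the introduction produces a single point piercing all of $\Fscr_1^{*}$. Symmetrically one point pierces $\Fscr_2^{*}$. This costs only $2$ points and reduces everything to the \emph{crossing} family $\Ccal=\Fscr_1\cap\Fscr_2$ of sets meeting both compacta, each of which therefore contains a point of $K_1$ and a point of $K_2$ and so runs across the strip separating them; by the previous paragraph $\Ccal$ is pairwise intersecting.

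Piercing $\Ccal$ within the remaining budget is the heart of the matter. A first useful reduction: for any direction $u$, a common point of three sets projects to a common value of their one-dimensional shadows, so the shadows of $\Ccal$ in direction $u$ are intervals inheriting the $(4,3)$-property, whence at most three of them are pairwise disjoint and $\Ccal$ is met by at most three lines orthogonal to $u$; in fact pairwise intersection alone already yields one transversal line in every direction. The trouble is that this delivers transversal \emph{lines}, not points. The slices $G\cap\ell$ carry no Helly property whatsoever: one can place arbitrarily many members of $\Ccal$ through a single point, each still reaching $K_1$ and $K_2$, whose slices on $\ell$ are pairwise disjoint. So $\Ccal$ genuinely cannot be pierced on a line, and the piercing points must instead be found among the concurrency points supplied by the $(4,3)$-property.

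\textbf{The main obstacle} is precisely locating those points. Writing $g_G=G\cap K_1$ and $g'_G=G\cap K_2$, the pairwise property says that for every pair $G,H\in\Ccal$ either the left traces $g_G,g_H$ meet or the right traces $g'_G,g'_H$ meet; and since both traces are nonempty, it suffices to pierce some members inside $K_1$ and the rest inside $K_2$. The plan is to split $\Ccal$ into a part whose $K_1$-traces carry the $(4,3)$-property and a part whose $K_2$-traces do, apply the one-dimensional bound (at most three points) to the shadows of each trace-family \emph{inside} the respective compactum, and arrange the bookkeeping so that the three pieces use at most $13$ points in total. The difficulty, and where the real work lies, is transferring the planar $(4,3)$-property to these trace-families: the concurrency point guaranteed by $(4,3)$ may lie in \emph{neither} $K_1$ nor $K_2$. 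Overcoming this requires exploiting the separating geometry---that every member of $\Ccal$ runs from the left compactum to the right one, so the segment it contains meets $\ell\cap\conv(K_1\cup K_2)$---to force the relevant concurrency points into one of the two compacta. This geometric localisation, rather than any of the Helly-type bookkeeping around it, is the step I expect to be the crux.
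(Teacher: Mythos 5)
Your proposal stops exactly where the proof has to happen, and you say so yourself: the piercing of the crossing family $\Ccal$ is left as a ``plan'' whose crux --- forcing the concurrency points supplied by the $(4,3)$-property into $K_1\cup K_2$ --- you identify but do not carry out. The preliminary reductions are fine (every member meets $K_1\cup K_2$; the one-sided families are $3$-Helly and hence each pierced by a single point via the compact variant of Helly's theorem), but they dispose of the easy part only. For $\Ccal$ you have no argument, no mechanism for producing the at most $11$ remaining points, and no route to the specific constant $13$; the one-dimensional shadow/interval machinery you set up delivers transversal lines, which, as you note, is not enough. So as written this is a genuine gap, not a finished proof.

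The way the paper closes this gap is worth comparing with your plan, because it sidesteps the localisation problem you (correctly) flag as hard by weakening the target: it localises concurrency points not into $K_1\cup K_2$ but into $F_0:=\conv(K_1\cup K_2)$. Concretely, if $F_1\cap F_2\cap F_3\neq\emptyset$, one picks $q$ in this intersection and, for each pair $F_iF_j$, a point $p_{ij}\in F_0\cap F_i\cap F_j$ (which exists by applying the $(4,3)$-property to $K_1,K_2,F_i,F_j$, exactly as in your pairwise observation); a Radon partition of $\{q,p_{12},p_{13},p_{23}\}$ then produces a point of $F_0\cap F_1\cap F_2\cap F_3$. Hence the family $\{F\cap F_0: F\in\Fscr\}$ consists of nonempty convex \emph{compacta} and still has the $(4,3)$-property, and the bound $13$ is then quoted wholesale from the theorem of Kleitman, Gy\'arf\'as and T\'oth (plus a compactness argument) rather than re-derived. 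Two lessons for your approach: first, insisting on piercing points inside $K_1\cup K_2$ itself is a much more rigid demand than necessary, since for a pair $G,H\in\Ccal$ the $(4,3)$-property only guarantees a common point in one of the two compacta, and the resulting two-colouring of pairs does not obviously transfer a $(4,3)$-property to either trace family; second, the constant $13$ is not something your elementary interval bookkeeping can be expected to reproduce --- the intended endgame is a reduction to the known compact case, and the only new ingredient needed is the Radon localisation into the convex hull.
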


\section{The counterexample}

Let us set

\[ F_1 := [-1,1]\times\{0\}, \quad F_2 := [0,2] \times \{0\}. \]

\noindent
Let $t_1 < t_2 < t_3 < \dots$ be a strictly increasing sequence of numbers between $0$ and $1$, and let
$s_1 > s_2 > \dots$ be a strictly decreasing sequence of negative numbers
that tends to $-\infty$. 
(For instance $t_n := 1 - \frac{1}{n}, s_n := -n$ would be a valid choice.)
Set $p_n := (t_n, 0)$; let $\ell_n$ denote the vertical line through $p_n$; and
let $\ell_n'$ denote the line through $p_n$ of slope $s_n$.

For $n \geq 3$ we now let $F_n$ be the set of all points either on or to the left of $\ell_n$ and either on or above $\ell_n'$.
See figure~\ref{fig:counterex}.

\begin{figure}[h!]
\begin{center}
\input{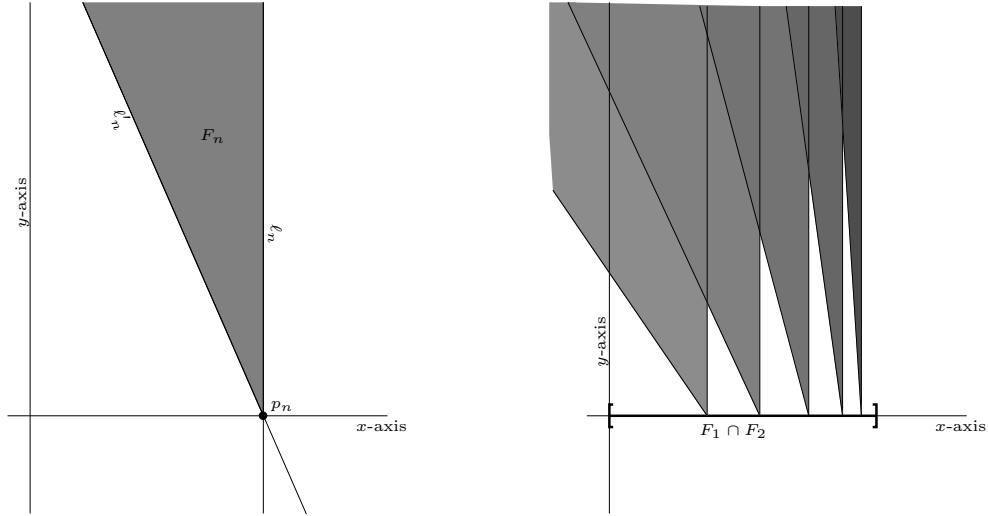} 
\end{center}
\caption{The construction of $F_n$ for $n \geq 3$ (left)
and part of the collection $\Fscr$ (right).\label{fig:counterex}}
\end{figure}

\noindent
Observe that by construction $F_n$ contains all sufficiently high points on the $y$-axis for all $n\geq 3$:

\begin{equation}\label{eq:yn}
\text{For each $n\geq 3$ there exists a $y_n > 0$ such that
$\{ (0,y) : y \geq y_n \} \subseteq F_n$.}
\end{equation}

\noindent
Let $\Fscr := \{ F_1, F_2, \dots \}$ be the resulting infinite collection of
closed convex sets.
We first establish that $\Fscr$ has the $(4,3)$-property.

\begin{lemma}
$\Fscr$ has the $(4,3)$-property. 
\end{lemma}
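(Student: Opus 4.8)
The plan is to fix an arbitrary four members of $\Fscr$ and, according to how many of them lie in $\{F_1, F_2\}$, point to three among the four that share a common point. Two elementary observations drive everything. First, for every $n \geq 3$ the point $p_n = (t_n,0)$ lies in $F_1 \cap F_2 \cap F_n$: since $0 < t_n < 1$ we have $t_n \in [-1,1]$ and $t_n \in [0,2]$, so $p_n \in F_1 \cap F_2$, while $p_n$ satisfies both defining constraints of $F_n$ because it lies exactly on $\ell_n$ (so it is on or to the left of $\ell_n$) and exactly on $\ell_n'$ (so it is on or above $\ell_n'$). Second, by~\eqref{eq:yn} each $F_n$ with $n \geq 3$ contains the entire ray $\{(0,y) : y \geq y_n\}$, hence any finite subfamily of $\{F_3, F_4, \dots\}$ has a common point: simply take $(0,Y)$ where $Y$ is the maximum of the finitely many relevant $y_n$.

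With these in hand I would split into cases on a four-element subfamily $\{A, B, C, D\} \subseteq \Fscr$. If at most one of the four sets belongs to $\{F_1, F_2\}$, then at least three of them are among $F_3, F_4, \dots$, and by the second observation those three meet in a common (sufficiently high) point of the $y$-axis. If instead both $F_1$ and $F_2$ are present, then the remaining two sets are some $F_i, F_j$ with $i,j \geq 3$, and by the first observation $F_1, F_2, F_i$ all contain $p_i$. In either case three of the four chosen sets have a point in common, which is precisely the $(4,3)$-property.

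I do not anticipate a genuine obstacle here; the only points needing care are the two membership verifications, in particular checking that $p_n$ meets the ``on or above $\ell_n'$'' constraint (it lies \emph{on} $\ell_n'$, so the inequality holds with equality), and confirming that the finite-intersection property of the $F_n$ follows directly from~\eqref{eq:yn} without invoking anything about the limiting behaviour of the sequences $(t_n)$ and $(s_n)$. The lemma is really a direct consequence of the design of the construction: the point $p_n$ was placed exactly so that $F_n$ touches $F_1 \cap F_2$, and the sets $F_n$ were arranged so as to share arbitrarily high portions of the $y$-axis.
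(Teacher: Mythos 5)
Your proposal is correct and follows essentially the same route as the paper: the case where both $F_1$ and $F_2$ appear is handled by the point $p_{i}$ for the smallest remaining index, and every other case reduces to three sets from $\{F_3,F_4,\dots\}$ meeting high on the $y$-axis via~\eqref{eq:yn}. The extra membership verifications you spell out are correct but the paper leaves them implicit.
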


\begin{proof}
Let us pick four abitrary distinct indices $i_1 < i_2 < i_3 < i_4$
and consider the quadruple $F_{i_1}, F_{i_2}, F_{i_3}, F_{i_4} \in \Fscr$.

If $i_1=1$ and $i_2=2$ then clearly $F_{i_1} \cap F_{i_2} \cap F_{i_3} = \{p_{i_3}\}$, so that
$F_{i_1}, F_{i_2}, F_{i_3}$ is an intersecting triple.
We can thus assume that $i_2, i_3, i_4 > 2$.
In this case $F_{i_2} \cap F_{i_3} \cap F_{i_4} \neq \emptyset$ by the observation~\eqref{eq:yn}.
\end{proof}

\noindent
It remains to show that $\Fscr$ does not have a finite transversal.

\begin{lemma}
$\tau(\Fscr) = \infty$.
\end{lemma}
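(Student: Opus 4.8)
The plan is to reduce everything to the following claim: every point of the plane lies in only finitely many of the sets $F_n$, $n\geq 3$. Granting this, suppose towards a contradiction that $\Fscr$ had a finite transversal $A$. Being a transversal of $\Fscr$, the set $A$ is in particular a transversal of $\{F_n : n\geq 3\}$, so for each $n\geq 3$ there is a point $a\in A$ with $a\in F_n$. Therefore $\{3,4,\dots\}\subseteq \bigcup_{a\in A}\{\, n\geq 3 : a\in F_n \,\}$. The right-hand side is a finite union (as $A$ is finite) of finite sets (by the claim), hence finite, whereas the left-hand side is infinite — a contradiction. Thus it suffices to establish the claim about individual points.

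To prove the claim I would first record the defining inequalities of $F_n$: a point $(x,y)$ lies in $F_n$ exactly when it is on or to the left of $\ell_n$ and on or above $\ell_n'$, that is,
\[ x \leq t_n \qquad\text{and}\qquad y \geq s_n(x-t_n) = (-s_n)(t_n - x). \]
Now fix $(x,y)$ and set $t^\ast := \lim_n t_n \in (0,1]$; since $(t_n)$ is strictly increasing we have $t_n < t^\ast$ for every $n$. If $x \geq t^\ast$, then $x \geq t^\ast > t_n$ for all $n$, the first inequality fails, and $(x,y)$ belongs to no $F_n$. If instead $x < t^\ast$, then $t_n - x \to t^\ast - x > 0$, so $(t_n-x)$ is eventually bounded below by the positive constant $(t^\ast - x)/2$; since $-s_n \to +\infty$, the lower bound $(-s_n)(t_n-x)$ tends to $+\infty$ and eventually exceeds the fixed number $y$, so the second inequality fails for all large $n$. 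In either case $(x,y)\in F_n$ for only finitely many $n$, which proves the claim.

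The conceptual content is exactly the tension already visible in \eqref{eq:yn}: the sets $F_n$ do share arbitrarily high points of the $y$-axis, which is what endows $\Fscr$ with the $(4,3)$-property, but the height $y_n = (-s_n)t_n$ at which a vertical line must meet $F_n$ marches off to infinity as $n\to\infty$. Consequently no fixed finite point set can keep up, and I expect no genuine obstacle beyond making this precise; the only place needing a moment's care is the boundary case $x\geq t^\ast$ (which is $x\geq 1$ for the explicit choice $t_n = 1-\tfrac{1}{n}$), handled above by the failure of the horizontal condition rather than the diagonal one.
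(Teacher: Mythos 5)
Your proof is correct and follows essentially the same strategy as the paper: reduce to showing that each point of the plane lies in only finitely many $F_n$, and then verify this because the slopes $s_n$ tend to $-\infty$ while $t_n - x$ stays bounded away from $0$ (the paper phrases this by comparing $s_n$ to the slope of the line through the given point and a fixed $p_{n_0}$, which is the same computation). Your version via the explicit inequalities $x \leq t_n$ and $y \geq (-s_n)(t_n-x)$ even avoids the paper's preliminary case split on the sign of the $y$-coordinate; no gaps.
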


\begin{proof}
It suffices to show that every point of the plane is in finitely many
elements of $\Fscr$.
Let $a = (a_x,a_y) \in \eR^2$ be arbitrary. 
If $a_y \leq 0$ then $a$ is in at most three elements of $\Fscr$. Let us therefore assume $a_y > 0$.
In this case, if $a_x \geq t_n$ for all $n \in \eN$ then $a$ is in no element of $\Fscr$. Let us therefore assume that 
there is at least one $n\in\eN$ such that $a_x < t_n$.
Let us fix an $n_0$ such that $a_x < t_{n_0}$, and set 

\[ s := - \frac{a_y}{t_{n_0} - a_x}. \]%
(Note that $s$ is exactly the slope of the line through $a$ and $p_{n_0}$.) \\
Since $s_n \to -\infty$, there is an $m_0$ such that $s_n < s$ for all $n \geq m_0$.

Observe that for all $n \geq \max(n_0,m_0)$ the point $a$ is below
the line $\ell_n'$ (as the point $p_n$ is to the right of $p_{n_0}$ and
$\ell_n'$ has a steeper slope than $s$).
This shows that $a \not\in F_n$ for all $n \geq \max(n_0,m_0)$.
Hence $a$ is in finitely many elements of $\Fscr$ as required.
\end{proof}

\noindent
{\bf Remark:} By adding additional compact sets to $\Fscr$ that each contain $[0,1]\times\{0\}$ 
we can obtain a collection $\Fscr'$ that contains 
an arbitrary number of compacta, and still has the $(4,3)$-property and $\tau(\Fscr') = \infty$.

\section{The proof of Theorem~\ref{thm:disjoint}}

The proof of the Hadwiger-Debrunner conjecture by Alon and Kleitman~\cite{AlonKleitman1992}
does not give a good bound on the universal constant $c$.
A better bound on this constant for the special case when $p=4, q=3$ was later
given by Kleitman, Gyarfas and T\'oth~\cite{KleitmanEtal2001}.

\begin{theorem}[Kleitman et al.~\cite{KleitmanEtal2001}]
If $\Fscr$ is a finite collection of convex sets in the plane
with the $(4,3)$-property then $\tau(\Fscr) \leq 13$. 
\end{theorem}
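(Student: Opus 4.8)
I will not reproduce the argument of Kleitman, Gy\'arf\'as and T\'oth in full; instead I sketch the strategy I would follow, which mirrors the $(p,q)$-method of Alon and Kleitman specialized to the planar case $p=4$, $q=3$ and then pushed to yield an explicit constant. The plan is to realize $\tau(\Fscr)$ as the covering number of a hypergraph and to control it through its linear-programming relaxation. A standard reduction restricts the candidate piercing points to a finite set (for instance the cells and vertices of the arrangement of the boundaries of the sets), so that $\tau(\Fscr)$ is the optimum of an integer covering program: minimize $\sum_x w(x)$ subject to $\sum_{x\in F} w(x)\ge 1$ for every $F\in\Fscr$ and $w\in\{0,1\}$. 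Relaxing to $w\ge 0$ gives the fractional transversal number $\tau^*(\Fscr)$, and the whole task splits into (i) bounding $\tau^*$ by an absolute constant and (ii) rounding $\tau^*$ up to $\tau$ at bounded cost.

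For (i) I would first extract from the $(4,3)$-property the hypothesis of a fractional Helly theorem. Writing $n=|\Fscr|$, a double count shows that a constant fraction of triples are intersecting: every $4$-subset contains an intersecting triple by the property, each intersecting triple lies in exactly $n-3$ of the $4$-subsets, and there are $\binom{n}{4}$ such subsets, so the number of intersecting triples is at least $\binom{n}{4}/(n-3)=\tfrac14\binom{n}{3}$. Thus at least a $1/4$ fraction of all triples have a common point. The planar fractional Helly theorem (Helly number $3$ in $\eR^2$) then produces a single point lying in a constant fraction of the sets; feeding this into the Alon--Kleitman averaging/LP argument bounds $\tau^*(\Fscr)$ by a constant that does not depend on $n$.

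For (ii) I would normalize an optimal fractional transversal to a probability measure $\mu$, so that every $F\in\Fscr$ satisfies $\mu(F)\ge 1/\tau^*$. A weak $\varepsilon$-net for convex ranges with $\varepsilon=1/\tau^*$ meets every convex set of $\mu$-measure at least $\varepsilon$, hence meets every member of $\Fscr$, so such a net is itself a transversal. Since weak $\varepsilon$-nets for convex sets in the plane have size $O(\varepsilon^{-2})$, this gives $\tau(\Fscr)\le O((\tau^*)^2)$ and therefore a universal finite bound.

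The main obstacle is not the finiteness of the bound but its \emph{sharpness}: the generic chain above yields an enormous constant, and extracting the value $13$ is the real content of the theorem. Reaching it requires replacing the black-box weak-net and LP steps with a dedicated planar analysis --- exploiting that a convex set meets a line in an interval, so that local pieces reduce to the one-dimensional interval-piercing problem where the integrality gap disappears, and carefully case-analysing the intersection patterns forced by the $(4,3)$-property so that every estimate is tight. I expect essentially all the cleverness, and the bulk of the work, to lie in this optimization of the constants rather than in the qualitative boundedness, which the fractional method already delivers.
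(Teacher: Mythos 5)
The first thing to say is that the paper itself contains \emph{no proof} of this statement: it is imported verbatim from Kleitman, Gy\'arf\'as and T\'oth~\cite{KleitmanEtal2001} and used as a black box (the only thing the paper adds on its own account is the compactness extension to infinite families of compacta, stated as Corollary~\ref{cor:Kleitman} without proof). So the relevant comparison is with the cited source, not with anything in this paper.

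Measured against what the statement actually asserts, your proposal has a genuine gap, and you name it yourself in your last paragraph: the pipeline you sketch proves only $\tau(\Fscr)\le C$ for some unspecified universal constant, not $\tau(\Fscr)\le 13$. The qualitative part of your sketch is sound --- the double count is correct (each intersecting triple lies in exactly $n-3$ of the $\binom{n}{4}$ quadruples, so there are at least $\binom{n}{4}/(n-3)=\tfrac14\binom{n}{3}$ intersecting triples), and fractional Helly plus LP duality plus weak $\varepsilon$-nets of size $O(\varepsilon^{-2})$ is exactly the Alon--Kleitman route to finiteness. But when those black boxes are instantiated, the resulting constant is orders of magnitude larger than $13$, and a proof attempt that ends with ``the real work is optimizing the constants, which I will not do'' does not prove the stated bound: the number $13$ \emph{is} the theorem. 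Moreover, your framing that $13$ is reached by tightening the LP/weak-net steps misrepresents how the bound is actually obtained: Kleitman, Gy\'arf\'as and T\'oth do not optimize the fractional method at all, but give a short, direct planar argument. A first step available here (and echoed in the present paper's proof of Theorem~\ref{thm:disjoint}) is that under the $(4,3)$-property the disjointness graph of the family contains no two independent edges and, once $|\Fscr|\ge 4$, no triangle of pairwise disjoint sets, hence is a star --- so after setting aside a single exceptional set the family is pairwise intersecting --- and the bound $13$ then comes from a dedicated geometric piercing construction for such families, not from rounding a fractional transversal. So the missing idea is precisely that bespoke geometric argument; nothing in your sketch converges to it.
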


\noindent
A standard compactness argument (which we do not repeat here)
shows that the same also holds if $\Fscr$ is an infinite collection
of convex compacta with the $(4,3)$-property.

\begin{corollary}\label{cor:Kleitman}
If $\Fscr$ is an infinite collection of convex, compact sets in the plane
and $\Fscr$ has the $(4,3)$-property then 
$\tau(\Fscr) \leq 13$. 
\end{corollary}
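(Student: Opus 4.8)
The plan is to upgrade the finite Kleitman bound to the infinite family by a finite-intersection-property argument carried out in a compact configuration space, using the one-point compactification of the plane to handle the fact that transversal points need not a priori lie in any fixed bounded region.

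Set $k=13$ and work in $X := (\eR^2 \cup \{\infty\})^k$, the $k$-fold product of the one-point compactification $\eR^2 \cup \{\infty\}$ of the plane; by Tychonoff's theorem $X$ is compact. Writing $\pi_i \colon X \to \eR^2 \cup \{\infty\}$ for the $i$-th coordinate projection, I would, for each $F \in \Fscr$, define
\[ C_F := \bigcup_{i=1}^{k} \pi_i^{-1}(F) = \{ (x_1,\dots,x_k) \in X : x_i \in F \text{ for some } i \}. \]
Since each $F$ is a compact subset of $\eR^2$, it is compact and hence closed inside $\eR^2 \cup \{\infty\}$; as $\pi_i$ is continuous, each $\pi_i^{-1}(F)$ is closed, and so $C_F$ is closed in $X$, being a finite union of closed sets. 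Thus $\{C_F : F \in \Fscr\}$ is a family of closed subsets of the compact space $X$.

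Next I would verify the finite intersection property. Given finitely many members $F^{(1)},\dots,F^{(m)} \in \Fscr$, the subfamily $\{F^{(1)},\dots,F^{(m)}\}$ inherits the $(4,3)$-property, so by the theorem of Kleitman et al.\ it has a transversal of size at most $13$. Listing the (at most $13$) transversal points as coordinates $x_1,\dots,x_k$, padding with repeated entries if fewer than $k$ are needed, yields a point lying in $C_{F^{(1)}} \cap \dots \cap C_{F^{(m)}}$, so every finite subcollection of $\{C_F\}$ has nonempty intersection. Compactness of $X$ then forces $\bigcap_{F\in\Fscr} C_F \neq \emptyset$; I would pick a point $(x_1,\dots,x_k)$ in this intersection.

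Finally I would extract an honest transversal. For every $F \in \Fscr$ some coordinate $x_i$ lies in $F$; since each $F$ is bounded it does not contain $\infty$, so any coordinate equal to $\infty$ pierces nothing and may be discarded. The remaining finite coordinates among $x_1,\dots,x_k$ number at most $13$ and meet every member of $\Fscr$, giving $\tau(\Fscr) \leq 13$. The only delicate point is the confinement of transversal points needed to invoke compactness: passing to $\eR^2 \cup \{\infty\}$ sidesteps it, and the compactness (hence boundedness) of the members of $\Fscr$ is precisely what guarantees the spurious $\infty$-coordinates can be thrown away at the end.
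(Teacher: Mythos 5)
Your argument is correct and complete. The paper itself gives no proof of this corollary---it simply appeals to ``a standard compactness argument (which we do not repeat here)''---and your finite-intersection-property argument with the closed sets $C_F$ in the compact space $(\eR^2\cup\{\infty\})^{13}$ is exactly a correct instantiation of that standard argument, including the right way to handle the a priori unboundedness of transversal points (pass to the one-point compactification, then discard $\infty$-coordinates using the boundedness of each member of $\Fscr$).
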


\begin{proofof}{Theorem~\ref{thm:disjoint}}
Let $\Fscr$ be an arbitrary infinite collection of
closed, convex sets with the $(4,3)$-property with two sets
$A, B \in \Fscr$ that are disjoint and compact.
Let us set 

\[ F_0 := \conv(A\cup B). \]%
%
%Then $F_0$ is clearly compact.
Let us first observe that

\begin{equation}\label{eq:obs1} 
F \cap F_0 \neq \emptyset \text{ for all } F \in\Fscr. 
\end{equation}

\noindent
To see this, suppose that some $F \in\Fscr$ is disjoint from $F_0$, and let $F' \in \Fscr$ be an arbitrary
element distinct from $F, F_1, F_2$ and $F_0$.
Then the quadruple $F_1, F_2, F, F'$ does not have an intersecting triple
as every triple contains a pair of disjoint sets. But this contradicts 
the $(4,3)$-property! 
Hence~\eqref{eq:obs1} holds as claimed.

Next, we claim that

\begin{equation}\label{eq:obs2} 
\text{If $F_1, F_2, F_3 \in \Fscr$ are such that
$F_1\cap F_2\cap F_3 \neq \emptyset$ then also
$F_0 \cap F_1\cap F_2\cap F_3 \neq \emptyset$.}
\end{equation}

To see that the claim~\eqref{eq:obs2} holds,  consider an arbitrary triple 
$F_1, F_2, F_3 \in \Fscr$ such that $F_1\cap F_2\cap F_3 \neq \emptyset$. 
Let us assume $F_1\cap F_2\cap F_3 \not\subseteq F_0$ (otherwise we are done), and
fix a $q \in (F_1\cap F_2\cap F_3)\setminus F_0$.
By considering the quadruple
$A, B, F_1, F_2$ we see that we either have $A \cap F_1 \cap F_2 \neq \emptyset$ or
$B \cap F_1 \cap F_2 \neq \emptyset$.
In either case, there is a point $p_{12} \in F_0 \cap F_1 \cap F_2$.
Similarly there are points $p_{13} \in F_0 \cap F_1 \cap F_2, p_{23} \in F_0 \cap F_2 \cap F_3$.

By Radon's lemma the set $\{q, p_{12}, p_{13}, p_{23} \}$ can be partitioned into
two sets whose convex hulls intersect.
Note that we cannot have that $q \in \conv(\{p_{12},p_{13},p_{23}\})$ since
$q \not\in F_0$ and $p_{12},p_{13},p_{23} \in F_0$ and $F_0$ is convex.
Hence, up to relabelling of the indices we have either
$p_{23} \in \conv( \{q, p_{12}, p_{13}\} )$ or $[q,p_{23}] \cap [p_{12},p_{13}] \neq \emptyset$.

In the first case we have that $p_{23} \in F_0 \cap F_1 \cap F_2 \cap F_3$ 
since we have chosen $p_{23} \in F_0 \cap F_2 \cap F_3$ and
$\conv( \{q, p_{12}, p_{13}\} ) \subseteq F_1$ as all three of
$q,p_{12},p_{13} \in F_1$ and $F_1$ is convex.

In the second case we have that the intersection point of $[q,p_{23}]$ and $[p_{12},p_{13}]$
is in $F_0 \cap F_1 \cap F_2 \cap F_3$.
This is because $[q,p_{23}] \subseteq F_2 \cap F_3$ and $[p_{12},p_{13}] \subseteq F_0 \cap F_1$.

Thus,~\eqref{eq:obs2} holds as claimed.

We now define a new collection of sets by setting:

\[ \Fscr' := \{ F \cap F_0 : F \in \Fscr \}. \]

\noindent
Since the members of $\Fscr$ are closed and convex and $F_0$ is compact and convex, each element of $\Fscr'$ is compact and convex.
By~\eqref{eq:obs1} each set of $\Fscr'$ is nonempty (this is needed since otherwise there cannot
be any transversal of $\Fscr'$), and by~\eqref{eq:obs2} together with the
fact that $\Fscr$ satisfies the $(4,3)$-property, the collection $\Fscr'$ also satisfies the $(4,3)$-property.
The theorem now follows from Corollary~\ref{cor:Kleitman} as every transversal of $\Fscr'$ is also a transversal
of $\Fscr$.
\end{proofof}

\section*{Acknowledgement}

I thank Bart de Keijzer and Branko Gr\"unbaum for helpful discussions.

\bibliographystyle{plain}
\bibliography{GrunbaumConjecture}

\begin{thebibliography}{1}

\bibitem{AlonKleitman1992}
N.~Alon and D.~J. Kleitman.
\newblock Piercing convex sets and the {H}adwiger-{D}ebrunner
  {$(p,q)$}-problem.
\newblock {\em Adv. Math.}, 96(1):103--112, 1992.

\bibitem{HadwigerDebrunner1957}
H.~Hadwiger and H.~Debrunner.
\newblock \"{U}ber eine {V}ariante zum {H}ellyschen {S}atz.
\newblock {\em Arch. Math. (Basel)}, 8:309--313, 1957.

\bibitem{Helly1923}
E.~Helly.
\newblock {\"U}ber {M}engen konvexer {K}\"orper mit gemeinschaftlichen
  {P}unkten.
\newblock {\em {J}ber. {D}eutsch. {M}ath. {V}erein}, 32:175--176, 1923.

\bibitem{KleitmanEtal2001}
D.~J. Kleitman, A.~Gy{\'a}rf{\'a}s, and G.~T{\'o}th.
\newblock Convex sets in the plane with three of every four meeting.
\newblock {\em Combinatorica}, 21(2):221--232, 2001.
\newblock Paul Erd{\H{o}}s and his mathematics (Budapest, 1999).

\bibitem{SoiferBoek}
A.~Soifer.
\newblock {\em Geometric etudes in combinatorial mathematics}.
\newblock Springer, New York, expanded edition, 2010.
\newblock With forewords by Philip L. Engel, Paul Erd{\H{o}}s, Branko
  Gr{\"u}nbaum, Peter D. Johnson, Jr., and Cecil Rousseau.

\end{thebibliography}

\end{document}